\newcommand{\Complex}{\mathbb C}
\def\IC{{\mathbb{C}}}
\def\cV{{\mathcal V}}
\def\cN{{\mathcal N}}
\def\x{{\bf x}}
\def\y{{\bf y}}
\def\glim{{\rm glim}}
\newtheorem{them}{Theorem}[section]
\newtheorem{lem}[them]{Lemma}
\newtheorem{cor}[them]{Corollary}
\theoremstyle{definition}
\newtheorem{Exam}[them]{Example}
\numberwithin{equation}{section}
\providecommand{\AMS}{$\mathcal{A}$\kern-.1667em%
\lower.25em\hbox{$\mathcal{M}$}\kern-.125em$\mathcal{S}$}
\begin{document}

\title[product of operators]{Spectral radius, numerical radius, and the product of operators}

\author[R. Alizadeh ]{Rahim Alizadeh}
\address{(Alizadeh) Department of Mathematics, Shahed University,
P.O. Box 18151-159, Tehran, Iran}
\email{alizadeh@shahed.ac.ir}

\author[ M.B. Asadi]{Mohammad B. Asadi}
\address{(Asadi) School of Mathematics, Statistics and Computer Science,
 College of Science, University of Tehran, Tehran,
  Iran, and School of Mathematics, Institute for Research in Fundamental Sciences (IPM),
Tehran 19395-5746, Iran}
\email{mb.asadi@khayam.ut.ac.ir}
%\thanks{}

\author[C.-M. Cheng ]{Che-Man Cheng*}
\address{(Cheng and Hong) Department of Mathematics, University of Macau, Macao, China}
\email{(Cheng) fstcmc@umac.mo}
\email{(Hong) 675642073@qq.com}
%\thanks{}

\author[W. Hong ]{Wanli Hong}
%\address{Department of Mathematics, University of Macau, Macao, China}

\author[C.-K. Li]{Chi-Kwong Li}
\address{(Li) Department of Mathematics, College of William and Mary, Williamsburg, VA 23187,
USA}
\email{ckli@math.wm.edu}

\subjclass[2010]{47A12}
\keywords{Spectral radius, numerical radius, product of operators}
\thanks{*Corresponding author}
%\dedicatory{}

%\commby{}%
% ----------------------------------------------------------------
\begin{abstract}
Let $\sigma(A)$, $\rho(A)$ and $r(A)$ denote the spectrum, spectral radius and numerical
radius of a bounded linear operator $A$ on a Hilbert space $H$,
respectively. We show that a linear operator $A$ satisfying
$$\rho(AB)\le r(A)r(B) \quad\text{ for all bounded linear operators } B$$
if and only if there is a unique $\mu \in \sigma (A)$ satisfying
$|\mu| = \rho(A)$ and $A = \frac{\mu(I + L)}{2}$ for a contraction
$L$ with $1\in\sigma(L)$. One can get the same conclusion on $A$ if $\rho(AB) \le
r(A)r(B)$ for all rank one operators $B$. If $H$ is of finite
dimension, we can further decompose $L$ as a direct sum of $C
\oplus 0$ under a suitable choice of orthonormal basis so that
$Re(C^{-1}x,x) \ge 1$ for all unit vector $x$.
\end{abstract}
\maketitle
% ----------------------------------------------------------------

\section{Introduction}

Let $B(H)$ be the algebra of bounded linear operators
$A$ acting on the Hilbert space $H$ with the inner product $(x,y)$.
We assume that $H$ has dimension at least 2 to avoid trivial consideration.
If $H$ is of dimension $n < \infty$, we identify $B(H)$ as the set $M_n$ of
$n\times n$ complex matrices acting on
$\IC^n$ equipped with the usual inner product $(x,y) = y^*x$.

Let $A \in B(H)$. Denote by
$\sigma(A)$ the spectrum of $A$ and
$\rho(A)$ the spectral radius of $A$. Furthermore, let
$$W(A)= \{(Ax,x): x \in H, (x,x) = 1\}\  \hbox{ and } \
r(A) = \sup\{|\mu|: \mu \in W(A)\}$$
be the numerical range and numerical radius of $A$, respectively.
The numerical range $W(A)$ is a bounded convex set in $\mathbb{C}$,
and  the numerical radius $r(A)$ is a norm on $B(H)$ satisfying
$$\rho(A) \le r(A) \le \|A\| \le 2r(A).$$
The numerical range and numerical radius are useful concepts in
studying linear operators. One may see \cite[Chapter 1]{Hor} or
\cite{Gus} for some basic background.

It is known (see \cite[Corollary 1.7.7]{Hor} and also \cite{Li}) that if $A$ is
a scalar multiple of a positive semidefinite operator, then
\begin{equation}\label{spectrum}
\sigma(AB)\subseteq W(A) W(B)\quad \mbox{ for all } B\in B(H).
\end{equation}
In \cite{Ali, Chg}, it is shown that the converse of this result is also true
if $H$ is of finite dimension. However, it may not be the case for the infinite
dimensional case; see \cite{Li}.
In this note, we consider $A \in B(H)$ satisfying the weaker condition that
\begin{equation}\label{nr}
\rho(AB)\le r(A)r(B) \quad\text{ for all } B\in B(H).
\end{equation}
It turns out that there is no difference in the finite and infinite dimensional case,
except that one can give some more detailed description of the structure of $A$
in the finite dimensional case. Here is our main theorem.

\begin{them}\label{necessary}
Let $A\in B(H)$ be nonzero. The following are equivalent.
\begin{itemize}
\item[{\rm(a)}] $ \rho(AB) \leq r(A)r(B)$ for all $B \in B(H)$.
\item[{\rm(b)}] $ \rho(AB) \leq r(A)r(B)$ for all rank one $B \in B(H)$.
\item[{\rm(c)}] There is a unique $\mu \in \sigma(A)$ attaining
$|\mu| = \rho(A)$, that satisfies
$\|A/\mu - I/2\| \le 1/2$, i.e.,
$A = \mu (I+L)/2$ for a contraction $L$ with $1 \in \sigma(L)$.
\end{itemize}
In case $H$ has dimension $n < \infty$,
conditions {\rm (a) -- (c)} are equivalent to the
following.
\begin{itemize}
\item[{\rm(d)}] There is a unique $\mu \in \sigma(A)$ satisfying $|\mu| = \rho(A)$ such that
the matrix $A$ is unitarily similar to
$$
\mu(I_p\oplus 0_q \oplus C)
$$
where $1 \le p \le n$, $0 \le q \le n-p$, and $C$ is invertible with
$\|C-I/2\| \le 1/2$, equivalently,
$$W(C^{-1}) \subseteq \{z :  \mbox{Re}(z)\ge 1\}.$$
\end{itemize}
\end{them}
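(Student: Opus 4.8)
\emph{Overview of the plan.} I will establish (a)$\Rightarrow$(b) (immediate), (c)$\Rightarrow$(a), (b)$\Rightarrow$(c), and, when $\dim H<\infty$, (c)$\Leftrightarrow$(d). All four conditions are unchanged when $A$ is replaced by a nonzero scalar multiple $cA$ (with $\mu$ in (c), (d) replaced by $c\mu$), so I normalize as convenient. For (c)$\Rightarrow$(a) I may thus assume $\mu=1$, so (c) reads $\|A-\tfrac12I\|\le\tfrac12$ — equivalently $A^*A\le\RE A$, where $\RE A=\tfrac12(A+A^*)$ — together with $1\in\sigma(A)$; since then $W(A)\subseteq\{z:|z-\tfrac12|\le\tfrac12\}$ and $\sigma(A)\subseteq\overline{W(A)}$, we get $\rho(A)=r(A)=1$. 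Now let $B\in B(H)$ and pick $\lambda\in\sigma(AB)$ with $|\lambda|=\rho(AB)$; being on $\partial\sigma(AB)$, $\lambda$ is an approximate eigenvalue, so there are unit $x_n$ with $(AB)x_n-\lambda x_n\to0$. Set $y_n=Bx_n$; then $Ay_n=(AB)x_n\to\lambda x_n$, and using $\|Ay_n\|^2=(A^*Ay_n,y_n)\le\RE(Ay_n,y_n)$ together with $(Ay_n,y_n)-\lambda(x_n,y_n)\to0$ (as $\|y_n\|\le\|B\|$),
$$ |\lambda|^2=\lim\|Ay_n\|^2\le\limsup\RE(Ay_n,y_n)=\limsup\RE\bigl(\lambda(x_n,y_n)\bigr)\le|\lambda|\sup_n|(Bx_n,x_n)|\le|\lambda|\,r(B). $$
For $\lambda\ne0$ this gives $\rho(AB)=|\lambda|\le r(B)=r(A)r(B)$, so (a) holds.

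\emph{Proof of (b)$\Rightarrow$(c): reformulation and near-eigenvectors.} For rank one $B=xy^*$ one has $\rho(AB)=|(Ax,y)|$ and $r(B)=\tfrac12(\|x\|\|y\|+|(x,y)|)$, so (b) says $|(Ax,y)|\le\tfrac12(\|x\|\|y\|+|(x,y)|)$ for all $x,y\in H$; since $B^*=yx^*$ is again rank one and $r(B^*)=r(B)$, the operator $A^*$ also satisfies (b). Maximizing the left side over all unit $y$ with $|(x,y)|=p$ prescribed shows that, for each unit vector $x$, writing $a=|(Ax,x)|$ and $b=(\|Ax\|^2-a^2)^{1/2}$, condition (b) is equivalent to $ap+b\sqrt{1-p^2}\le\tfrac12(1+p)$ for all $p\in[0,1]$; an elementary analysis of this one-variable inequality yields $\|Ax\|^2\le a^2+\tfrac14$ when $a\le\tfrac12$ and $\|Ax\|^2\le a$ when $a\ge\tfrac12$, and in particular $\|A\|\le1$. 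Now normalize $r(A)=1$, choose unit $u_n$ with $|(Au_n,u_n)|\to1$, and after passing to a subsequence and replacing $A$ by $e^{i\theta}A$ assume $(Au_n,u_n)\to1$. Applying (b) with $x=u_n$, $y=(u_n+tw_n)/\sqrt{1+t^2}$, where $w_n$ is the unit vector in the direction of $Au_n-(Au_n,u_n)u_n$, and letting $t>0$ vary, gives $\RE(Au_n,u_n)+t\,\|Au_n-(Au_n,u_n)u_n\|\le\tfrac12(1+\sqrt{1+t^2})$; optimizing in $t$ forces $\|Au_n-(Au_n,u_n)u_n\|\to0$, hence $Au_n-u_n\to0$, and running the same argument for $A^*$ (legitimate since $(A^*u_n,u_n)=\overline{(Au_n,u_n)}\to1$) gives $A^*u_n-u_n\to0$. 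In particular $1\in\sigma(A)$ and $\rho(A)=1$.

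\emph{Proof of (b)$\Rightarrow$(c): the operator inequality.} Fix a unit vector $z$ and feed $x=u_n+tz$ (with $t>0$ small, $n$ large) into $\|Ax\|^2\le|(Ax,x)|$, valid whenever $|(Ax,x)|\ge\tfrac12\|x\|^2$. Because $Au_n-u_n\to0$ and $A^*u_n-u_n\to0$, every inner product of $u_n$ against an $A$- or $A^*$-image of a fixed vector agrees with the corresponding inner product against $u_n$ up to $o(1)$ (as $n\to\infty$, for fixed $t,z$), so $\|Ax\|^2=1+2t\RE(u_n,z)+t^2\|Az\|^2+o(1)$, $(Ax,x)=1+2t\RE(u_n,z)+t^2(Az,z)+o(1)$, $\|x\|^2=1+2t\RE(u_n,z)+t^2$. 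For $t$ small and $n$ large the proviso holds, so $\|Ax\|^2\le|(Ax,x)|$; subtracting and using $|(Ax,x)|=\RE(Ax,x)+O(t^4)+o(1)$ gives $t^2(\|Az\|^2-\RE(Az,z))\le o(1)+O(t^4)$. Letting $n\to\infty$ and then $t\to0$ yields $\|Az\|^2\le\RE(Az,z)$ for every unit $z$, i.e. $A^*A\le\RE A$; hence $A=\tfrac12(I+L)$ with $L=2A-I$ a contraction and $1\in\sigma(L)$, and the uniqueness of $\mu$ is automatic since $\sigma(A)\subseteq\{z:|z-\tfrac12|\le\tfrac12\}$ meets $\{|z|=1\}$ only at $1$. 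Undoing the normalizations gives (c).

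\emph{The finite-dimensional equivalence and the main obstacle.} For (c)$\Leftrightarrow$(d) when $\dim H<\infty$, normalize $\mu=1$, so $L=2A-I$ is a contraction matrix with $1\in\sigma(L)$. Since $\|L\|\le1$, the equality case of Cauchy--Schwarz gives $\ker(L-\zeta I)=\ker(L^*-\bar\zeta I)$ for every unimodular $\zeta$; applying this with $\zeta=1$ and then $\zeta=-1$ shows $\ker(L-I)$ and, within its orthocomplement, $\ker(L+I)$ reduce $L$, so $L$ is unitarily similar to $I_p\oplus(-I_q)\oplus L_2$ with $p\ge1$, $\|L_2\|\le1$, and $\pm1\notin\sigma(L_2)$; consequently $A$ is unitarily similar to $I_p\oplus 0_q\oplus C$ with $C=\tfrac12(I+L_2)$ invertible and $\|C-\tfrac12I\|=\tfrac12\|L_2\|\le\tfrac12$, and conjugating $C^*C\le\RE C$ by the invertible $C$ rewrites this as $\RE(C^{-1})\ge I$, i.e. $W(C^{-1})\subseteq\{z:\RE z\ge1\}$; the converse is a direct verification. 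The step I expect to be the main obstacle is the concluding part of (b)$\Rightarrow$(c): extracting the two-sided operator inequality $A^*A\le\RE A$ from a hypothesis that controls $A$ only through rank one test operators, and doing it in infinite dimensions where the near-eigenvectors $u_n$ need not converge. What makes it work is that once $u_n$ satisfies \emph{both} $Au_n-u_n\to0$ and $A^*u_n-u_n\to0$, the $u_n$ behave like honest unit eigenvectors for the eigenvalue $1$ in all inner-product computations up to $o(1)$; probing (b) along the directions $u_n+tz$ and bookkeeping the orders in $t$ then upgrades the scalar rank one inequalities to the desired operator bound.
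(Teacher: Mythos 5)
Your proof is correct, but it follows a genuinely different route from the paper's at the two substantive implications. For (b)$\Rightarrow$(c), the paper first shows $r(A)=\|A\|$, then invokes the Berberian construction to pass to an extension $\tilde A\in B(K)$ on which the peripheral approximate eigenvalue becomes a true eigenvalue, writes $\tilde A=[1]\oplus T$, and tests with rank-one operators built from a one-parameter family of vectors plus a Taylor expansion to get $\|T-I/2\|\le 1/2$; you instead convert the rank-one hypothesis into the pointwise scalar dichotomy ($\|Ax\|^2\le |(Ax,x)|^2+\tfrac14$ or $\|Ax\|^2\le |(Ax,x)|$), manufacture unit vectors $u_n$ with both $Au_n-u_n\to0$ and $A^*u_n-u_n\to0$ directly from that dichotomy, and perturb along $u_n+tz$ to obtain $A^*A\le\RE A$, which avoids the Berberian machinery altogether. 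For (c)$\Rightarrow$(a), the paper argues by contradiction, again via Berberian, a $2\times2$ compression, an ad hoc unitary $V$, and the cited inclusion $\sigma(\hat A\hat B)\subseteq W(\hat A)W(\hat B)$ for multiples of positive semidefinite operators; your argument is direct and self-contained: rewriting (c) as $A^*A\le\RE A$ and applying it to approximate eigenvectors of $AB$ at a peripheral spectral point gives $|\lambda|^2\le|\lambda|\,r(B)$ in a few lines, with no auxiliary construction or external theorem. For (c)$\Leftrightarrow$(d) you split off $\ker(L-I)$ and $\ker(L+I)$ using the standard fact that unimodular eigenvalues of a contraction are reducing, whereas the paper uses the Schur triangular form together with its Lemma 2.2; these are equivalent in content, and both reach $\RE(C^{-1})\ge I$ the same way. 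The trade-off: the paper's proof leans on known results and the Berberian representation, while yours is more elementary and gives the cleaner structural inequality $A^*A\le\RE A$ as the working form of (c). Two cosmetic points to fix in a write-up: state the normalization $r(A)=1$ before rewriting (b) as $|(Ax,y)|\le\tfrac12(\|x\|\|y\|+|(x,y)|)$ (as written you drop the factor $r(A)$ a sentence too early), and note the trivial cases $\rho(AB)=0$ in (c)$\Rightarrow$(a) and $\beta_n=0$ (no $w_n$ needed) in (b)$\Rightarrow$(c).
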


Note that in condition (d), if $p+q = n$ then $C$ is vacuous;
if $C$ is vacuous or if $C$ is positive definite, then
$A$ is a multiple of a positive semidefinite operator so that
$\sigma(AB) \subseteq W(A)W(B)$ for all $B \in M_n$.
On the other hand, it is easy to find a non-normal matrix $A$ that satisfies (\ref{nr}).

\begin{Exam}\label{example}
Let
\begin{eqnarray}
A= \frac{1}{2}I_3 + \frac{1}{2} \left(
  \begin{array}{ccc}
    1 & 0 & 0 \nonumber \\
    0 & 0 & 1 \\
    0 & 0 & 0  \nonumber
  \end{array}
\right).
\end{eqnarray}
Then $A$ is not normal, and $A = \frac{1}{2}(I+L)$, where $L$ is a
contraction with $1 \in \sigma(L)$. By Theorem \ref{necessary},
$\rho(AB) \leq r(A) r(B)$ holds for all $B \in M_n$.
\end{Exam}

Note also that in the infinite dimensional case, condition (d) is
not equivalent to the conditions (a) -- (c). For example, if $H$ is a
separable Hilbert space with an orthonormal basis $\{e_1, e_2,
\dots\}$ and $A$ is the diagonal operator $A e_m = e_m/m$ for $m =
1, 2, \dots$. Then condition (c) holds, but the inverse of $A$ (and hence of $C$) is not bounded.
Also, if $A e_m = [m/(m+1)]e_m$ for $m = 1, 2, \dots$, then condition (c) holds with $\mu=1$.
However, $A$ is not unitarily similar to the form $[1]\oplus T$.

We further remark that the last equivalent condition in
condition (d) of Theorem \ref{necessary} seems to be more complicated
as it involved the inverse of $C$.
Nevertheless, we would like to include this condition because if
$A$ is of such a form, it follows from a known result,
\cite[Theorem 1.7.6]{Hor}, and Lemma \ref{invertible} below that condition (a) in Theorem 1.1 is true.
In that result, again, the inverse of a matrix is involved.

\section{Proof of Theorem \ref{necessary} and auxiliary results}

We begin with some lemmas. The first three can be verified easily.

\begin{lem}\label{invertible}
Let $A=A'\oplus 0\in M_n$ where $A'\in M_m$ is invertible.
Then
\begin{equation*}\label{AB}
\rho(AB)\le r(A)r(B) \quad\text{for all } B\in M_n
\end{equation*}
if and only if
\begin{equation*}\label{A'B'}
\rho(A'B')\le r(A')r(B') \quad\text{for all } B'\in M_m.
\end{equation*}
\end{lem}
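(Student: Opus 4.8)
The plan is to reduce both implications to the block decomposition $\IC^n=\IC^m\oplus\IC^{n-m}$ on which $A=A'\oplus 0$ acts, using two elementary facts: that $\rho(AB)=\rho(A'B_{11})$, where $B_{11}$ is the leading principal $m\times m$ submatrix of $B$, and that a compression of an operator cannot enlarge its numerical range.

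First I would write an arbitrary $B\in M_n$ conformally as
$$B=\begin{pmatrix} B_{11} & B_{12}\\ B_{21} & B_{22}\end{pmatrix},\qquad B_{11}\in M_m,$$
so that
$$AB=\begin{pmatrix} A'B_{11} & A'B_{12}\\ 0 & 0\end{pmatrix}$$
is block upper triangular with diagonal blocks $A'B_{11}$ and $0$. Hence $\sigma(AB)=\sigma(A'B_{11})\cup\{0\}$ and $\rho(AB)=\rho(A'B_{11})$. Next I would check that $r(A)=r(A')$: for a unit vector $x\in\IC^m$, the vector $\hat x=(x,0)\in\IC^n$ is a unit vector with $(B_{11}x,x)=(B\hat x,\hat x)$ for every $B\in M_n$, so $W(B_{11})\subseteq W(B)$; taking $B=A'\oplus 0$ gives $r(A')\le r(A)$, while for a unit vector $v=(y,z)\in\IC^n$ with $y\ne 0$ one computes $((A'\oplus 0)v,v)=\|y\|^2(A'(y/\|y\|),y/\|y\|)$, so $|((A'\oplus 0)v,v)|\le r(A')$ (the case $y=0$ being trivial), giving $r(A)\le r(A')$. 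The same observation yields $r(B'\oplus 0)=r(B')$ for all $B'\in M_m$.

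With these in hand, the forward implication is immediate: for $B'\in M_m$ apply the hypothesis on $A$ to $B=B'\oplus 0$ to obtain $\rho(A'B')=\rho(AB)\le r(A)r(B)=r(A')r(B')$. For the reverse implication, fix $B\in M_n$, apply the hypothesis on $A'$ to the compression $B_{11}\in M_m$, and combine with $W(B_{11})\subseteq W(B)$:
$$\rho(AB)=\rho(A'B_{11})\le r(A')\,r(B_{11})\le r(A')\,r(B)=r(A)\,r(B).$$

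I do not anticipate any genuine obstacle; the only points deserving an explicit line are the identity $\rho(AB)=\rho(A'B_{11})$, the identity $r(A)=r(A')$, and the compression inequality $r(B_{11})\le r(B)$, each a short computation.
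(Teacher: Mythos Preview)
Your argument is correct; the paper does not actually supply a proof of this lemma, remarking only that it ``can be verified easily,'' and what you have written is precisely the natural verification via the block upper-triangular form of $AB$ and the compression inequality $W(B_{11})\subseteq W(B)$. One small observation: your argument nowhere uses the invertibility of $A'$, so you have in fact proved the lemma under the weaker hypothesis that $A=A'\oplus 0$ for an arbitrary $A'\in M_m$.
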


\begin{lem}\label{a11}
If $A=(a_{ij})\in M_n$ and
  $|a_{jj}|=||A||$, then $a_{ij}=a_{ji}=0$ for all $i \neq j$.
\end{lem}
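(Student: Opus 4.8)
The plan is to test the operator $A$ on the single standard basis vector $e_j$ and exploit the equality case of the Cauchy--Schwarz inequality. Write $\|\cdot\|$ for the operator norm on $M_n$, and recall that $Ae_j$ is precisely the $j$-th column of $A$, with $(Ae_j,e_j)=a_{jj}$. Then
$$\|A\| = |a_{jj}| = |(Ae_j,e_j)| \le \|Ae_j\|\,\|e_j\| \le \|A\|\,\|e_j\|^2 = \|A\|,$$
so every inequality in this chain is forced to be an equality. In particular $|(Ae_j,e_j)| = \|Ae_j\|\,\|e_j\|$, and the equality case of Cauchy--Schwarz forces $Ae_j$ to be a scalar multiple of $e_j$; taking the inner product of that relation with $e_j$ identifies the scalar as $a_{jj}$, so $Ae_j = a_{jj}e_j$. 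Reading off the coordinates of this column gives $a_{ij}=0$ for every $i\neq j$.

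To obtain the vanishing of the $j$-th row, I would apply the same argument to the adjoint $A^*$. Its $(j,j)$ entry is $\overline{a_{jj}}$, and since $\|A^*\| = \|A\| = |a_{jj}| = |\overline{a_{jj}}|$, the previous paragraph applied to $A^*$ yields $A^*e_j = \overline{a_{jj}}\,e_j$, i.e.\ $\overline{a_{ji}}=0$, hence $a_{ji}=0$ for every $i\neq j$. Combining the two conclusions proves the lemma.

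There is essentially no serious obstacle here: the whole content is the observation that a diagonal entry of modulus $\|A\|$ already saturates both the Cauchy--Schwarz bound $|(Ae_j,e_j)|\le \|Ae_j\|$ and the trivial bound $\|Ae_j\|\le\|A\|$, and the only point requiring a moment's care is that the row and the column must be treated separately, which the passage to $A^*$ handles for free. (One could alternatively avoid adjoints by testing $|(Ax,y)|\le \|A\|$ on unit vectors $x,y$ supported on the coordinates $j$ and $i$ and optimizing, but the adjoint argument is cleaner.)
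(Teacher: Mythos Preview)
Your argument is correct. The paper does not actually supply a proof of this lemma---it simply declares that it ``can be verified easily''---so there is nothing to compare against, but your Cauchy--Schwarz equality-case argument (applied once to $A$ for the column and once to $A^*$ for the row) is exactly the kind of routine verification the authors had in mind.
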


We shall frequently use the following lemma without explicitly mentioning it.

\begin{lem}\label{usi}
Suppose $A,\tilde{A}\in B(H)$ are unitarily similar.
Then
$$\rho(AB) \leq r(A)r(B) \quad\mbox{for all } B \in B(H)$$
if and only if
$$\rho(\tilde{A}B) \leq r(\tilde{A})r(B)\quad\mbox{ for all } B \in B(H).$$
\end{lem}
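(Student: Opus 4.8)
The plan is to reduce the statement to two elementary invariance properties: the numerical radius $r(\cdot)$ is invariant under unitary similarity, and the spectral radius $\rho(\cdot)$ is invariant under arbitrary similarity, in particular under unitary similarity. So I would fix a unitary $U\in B(H)$ with $\tilde A = UAU^*$ and first record that for every $X\in B(H)$ one has $W(UXU^*) = W(X)$ — because $(UXU^*x,x) = (XU^*x,U^*x)$ and $x\mapsto U^*x$ is a bijection of the unit sphere of $H$ — hence $r(UXU^*) = r(X)$; and $\sigma(UXU^*) = \sigma(X)$, hence $\rho(UXU^*) = \rho(X)$.

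The second step is the algebraic identity that moves the conjugation onto the second factor: for any $B\in B(H)$,
$$\tilde A B = UAU^*B = U\bigl(A(U^*BU)\bigr)U^*.$$
Applying the two invariances from the first step gives $\rho(\tilde A B) = \rho\bigl(A(U^*BU)\bigr)$, together with $r(\tilde A) = r(A)$ and $r(U^*BU) = r(B)$.

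Combining these, if $\rho(AB')\le r(A)r(B')$ holds for all $B'\in B(H)$, then taking $B' = U^*BU$ yields $\rho(\tilde A B) = \rho\bigl(A(U^*BU)\bigr) \le r(A)r(U^*BU) = r(\tilde A)r(B)$ for every $B$, which is one implication. The reverse implication is obtained by the same argument applied to $\tilde A$ with the unitary $U^*$ (using $A = U^*\tilde A U$). I do not expect any real obstacle here; the only point needing (minimal) care is keeping track of where the unitary conjugation lands inside the product $\tilde A B$, which is precisely what the displayed identity above settles.
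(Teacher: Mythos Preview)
Your argument is correct and is exactly the routine verification the paper has in mind; the paper itself offers no proof beyond remarking that the lemma ``can be verified easily.'' The only ingredients needed are the unitary invariance of $r(\cdot)$ and $\rho(\cdot)$ together with the identity $\tilde A B = U\bigl(A(U^*BU)\bigr)U^*$, all of which you have supplied.
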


\begin{lem}\label{r1nr}
Suppose $A\in B(H)$ has rank at most one. Then
$$
r(A)=\frac12\left(\sqrt{{\rm tr}(A^*A)}+|{\rm tr}(A)|\right).
$$
\end{lem}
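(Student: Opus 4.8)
The plan is to exploit the rank-one structure directly. Write $A = u v^*$, meaning $A\xi = (\xi,v)\,u$ for all $\xi\in H$, where $u$ spans the range of $A$; if $A=0$ the identity is trivial, so assume $u,v\neq 0$. A quick computation gives ${\rm tr}(A) = (u,v)$ and $A^*A = \norm{u}^2\,vv^*$, hence ${\rm tr}(A^*A) = \norm{u}^2\norm{v}^2$. Thus the asserted formula is equivalent to
$$
r(A) = \tfrac12\left(\norm{u}\,\norm{v} + \abs{(u,v)}\right),
$$
and since $(A\xi,\xi) = (\xi,v)\,\overline{(\xi,u)}$, what I must show is that $\sup\{\,\abs{(\xi,u)}\,\abs{(\xi,v)} : \norm{\xi}=1\,\}$ equals the right-hand side.

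Next I would reduce the supremum to the subspace $K = {\rm span}\{u,v\}$, which has dimension at most $2$. For a unit vector $\xi$, let $\xi_K = P_K\xi$ be the orthogonal projection onto $K$; since $u,v\in K$ we have $(\xi,u) = (\xi_K,u)$ and $(\xi,v) = (\xi_K,v)$, whence $(A\xi,\xi) = \norm{\xi_K}^2\,(A\eta,\eta)$ for the unit vector $\eta = \xi_K/\norm{\xi_K}$ (when $\xi_K\neq 0$; otherwise $(A\xi,\xi)=0$). As $\norm{\xi_K}\le 1$, the supremum is attained over the unit sphere of $K$, which is compact. This argument is dimension-free: it applies verbatim in infinite dimensions because $A^*A$ is finite rank and the only relevant data are $u$ and $v$.

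It then remains to solve a two-dimensional optimization. Choosing an orthonormal basis of $K$ with $u = \alpha e_1$, write $v = \beta e_1 + \gamma e_2$ and $\xi = \cos\theta\,e^{i\phi_1}e_1 + \sin\theta\,e^{i\phi_2}e_2$ with $\theta\in[0,\pi/2]$. Then $\abs{(\xi,u)} = \abs{\alpha}\cos\theta$, independent of the phases, while $\abs{(\xi,v)} = \abs{\bar\beta\cos\theta\,e^{i\phi_1} + \bar\gamma\sin\theta\,e^{i\phi_2}}$ is maximized over $\phi_1,\phi_2$ by aligning the two summands, giving $\abs{\beta}\cos\theta + \abs{\gamma}\sin\theta$. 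Maximizing
$$
\abs{\alpha}\cos\theta\,(\abs{\beta}\cos\theta + \abs{\gamma}\sin\theta) = \tfrac{\abs{\alpha}}{2}\left(\abs{\beta} + \abs{\beta}\cos 2\theta + \abs{\gamma}\sin 2\theta\right)
$$
over $\theta$ uses $\max_\theta(\abs{\beta}\cos 2\theta + \abs{\gamma}\sin 2\theta) = \sqrt{\abs{\beta}^2+\abs{\gamma}^2}$, producing $\tfrac{\abs{\alpha}}{2}(\abs{\beta} + \sqrt{\abs{\beta}^2+\abs{\gamma}^2})$. Recognizing $\abs{\alpha}\abs{\beta} = \abs{(u,v)}$ and $\abs{\alpha}\sqrt{\abs{\beta}^2+\abs{\gamma}^2} = \norm{u}\norm{v}$ recovers the formula.

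The calculation is elementary throughout; the only genuine decisions are the reduction to $K$ (which keeps the argument independent of $\dim H$) and the phase-alignment step, which is the part one should state carefully. As an alternative to the explicit optimization, one could invoke the elliptical range theorem: on $K$ the matrix of $A$ has eigenvalues $0$ and $(u,v)$, so $W(A)$ is the ellipse with these two foci and minor axis $\sqrt{{\rm tr}(A^*A) - \abs{(u,v)}^2}$; since the origin is one focus, $r(A)$ equals the focus-to-far-vertex distance $\mathfrak{a}+c$, and computing $\mathfrak{a} = \tfrac12\sqrt{{\rm tr}(A^*A)}$ and $c = \tfrac12\abs{(u,v)}$ again yields $\tfrac12(\sqrt{{\rm tr}(A^*A)} + \abs{(u,v)})$.
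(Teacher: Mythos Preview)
Your argument is correct. The paper's own proof is much terser: it notes that $A$ is unitarily similar to $X\oplus 0$ with $X\in M_2$ having eigenvalues $0$ and ${\rm tr}(A)$, and then simply invokes the elliptical range theorem to read off the formula. Your primary route performs the same reduction to a two-dimensional subspace (via projection onto $K={\rm span}\{u,v\}$ rather than unitary similarity, but this is equivalent) and then carries out the supremum computation by hand with an explicit trigonometric optimization. The advantage of your approach is that it is self-contained and does not rely on the elliptical range theorem, which itself requires proof; the paper's approach is shorter and, as a by-product, identifies the full shape of $W(A)$. You in fact sketch the elliptical-range argument at the end as an alternative, and that sketch is precisely the paper's method.
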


\begin{proof}
Suppose $A\in B(H)$ has rank at most one.
Then, $A$ is unitarily similar to $X\oplus 0$ where $X\in M_2$ has eigenvalues 0
and $\text{tr}(A)$.
By the elliptical range theorem (i.e., \cite[1.3.6]{Hor}), the result can be deduced readily.
\end{proof}

In the finite dimensional case,
$W(A)=\overline{W(A)}$ is compact, every element in $\sigma(A)$
is an eigenvalue, and there is a  unit vector $x\in H$ attaining
$\|Ax\| = \|A\|$.  However, it might not be the case in the infinite dimensional case.
Nevertheless,  we can use the Berberian construction (see \cite{B62}) to overcome
this obstacle in our proof.
We will obtain another lemma, which is
similar to \cite[Lemma 2.4]{Li}.
We include the details in the following for completeness and easy reference.

We identify the space $\ell_\infty$ of bounded scalar sequences with  the $C^*$-algebra
$C(\beta\mathbb{N})$ of continuous functions on the Stone-Cech compactification
$\beta \mathbb N$ of $\mathbb N$.  Here,
a bounded sequence $\lambda=(\lambda_n)$ in $\ell_\infty$ corresponds
to a function $\hat{\lambda}$ in  $C(\beta\mathbb{N})$
with $\hat{\lambda}(n)=\lambda_n$ for all $n=1,2,\ldots$.
Take any point $\xi$ from $\beta\mathbb{N}\setminus \mathbb{N}$.
The point evaluation $\lambda\mapsto \hat{\lambda}(\xi)$ of $\ell_\infty$ gives a
nonzero multiplicative
generalized Banach limit, denoted by $\glim$, that  satisfies the following conditions.
For any bounded sequences $(a_n)$ and $(b_n)$ in $\ell_\infty$ and scalar $\gamma$,
we have
\begin{enumerate}
    \item $\glim (a_n+b_n) = \glim (a_n) + \glim (b_n)$.
    \item $\glim (\gamma a_n) = \gamma \glim (a_n)$.
    \item $\glim (a_n) = \lim a_n$ whenever $\lim a_n$ exists.
    \item $\glim (a_n) \ge 0$ whenever $a_n \ge 0$ for all $n$.
    \item $\glim (a_nb_n) = \glim (a_n)\glim (b_n)$.
\end{enumerate}
Note that (iv) implies $\glim(a_n)$ is real if  $\{a_n\}$ is real.
It follows that
$\glim(\overline a_n) = \overline{\glim(a_n)}$ and
$\glim (a_n \overline a_n) = \glim |a_n| \glim |\overline{a_n}|= \glim(a_n)^2$.
(Of course, this also follows from the
identification of the space $\ell_\infty$ of bounded scalar sequences with
the $C^*$-algebra
$C(\beta\mathbb{N})$.)

Denote by $\cV$ the set of all bounded
sequences $ \{x_n\}$ with $x_n \in  H$. Then $\cV$ is a vector
space relative to the definitions $\{x_n\}+\{y_n\} = \{x_n + y_n\}$
and $\gamma \{x_n\} = \{\gamma x_n\}$.  Let $\cN$
be the set of all   sequences $\{x_n\}$ such that $\glim (\langle x_n,x_n\rangle) = 0$.
Then $\cN$ is a
linear subspace of $\cV$. Denote by $\x $ the coset $\{x_n\}+\cN$. The
quotient vector space $\cV/\cN$ becomes an inner product space with
the inner product
$\langle \x ,\y \rangle  = \glim (\langle x_n,y_n\rangle)$.
Let $K$ be the completion of $\cV/\cN$. If
$x \in H$, then $ \{x\}$ denotes the  constant sequence defined by
$x$. Since $\langle\x,\y\rangle =  \langle x,y\rangle $ for $\x  =
\{x\}+\cN$ and $\y  = \{y\}+\cN$, the mapping $x \mapsto \x $ is an isometric
linear map of $H$ onto a closed subspace  of $K $
and $K$ is an
extension of $H$. For an operator $T \in  {\mathcal B}(H)$, define
$$T_0(\{x_n\}+\cN) = \{Tx_n\} + \cN.$$
We can extend $T_0$ on $K$, which will be denoted by $T_0$ also.  The
mapping $\phi: {\mathcal B}(H)\to  {\mathcal B}(K)$ given by $\phi(T)= T_0$  is
a unital  isometric
$*$-representation with $\sigma(T)=\sigma(T_0)$.
Moreover, the approximate eigenvalues of $T$ (and also $T_0$)
will become eigenvalues of $T_0$; see \cite{B62}.

\begin{lem} \label{2.5}
Let $\tilde A \in B(K)$ be the extension of $A \in B(H)$ in the Berberian construction.
Suppose $\rho(AB) \le r(A)r(B)$ for all rank one $B \in B(H)$.
Then
$$
\rho(\tilde A B') \le r(\tilde A)r(B') \text{ for all rank one } B' \in B(K).
$$
\end{lem}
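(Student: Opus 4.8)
The plan is to transport the hypothesis on $B(H)$ to $B(K)$ one coordinate at a time through $\glim$, after first recasting everything in terms of vectors. A rank one operator $B'\in B(K)$ has the form $B'z=\langle z,\y\rangle\,\x$ for some $\x,\y\in K$, so $\tilde AB'z=\langle z,\y\rangle\,\tilde A\x$ is again of rank at most one, whence $\rho(\tilde AB')=|\langle \tilde A\x,\y\rangle|$; and $r(B')=\tfrac12\bigl(\|\x\|\,\|\y\|+|\langle \x,\y\rangle|\bigr)$ by Lemma \ref{r1nr}. Since $H$ sits isometrically inside $K$, we have $W(A)\subseteq W(\tilde A)$ and hence $r(A)\le r(\tilde A)$. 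Therefore it suffices to establish
\[
2\,|\langle \tilde A\x,\y\rangle|\ \le\ r(A)\bigl(\|\x\|\,\|\y\|+|\langle \x,\y\rangle|\bigr)\qquad\text{for all }\x,\y\in K,
\]
as this gives $\rho(\tilde AB')\le r(A)r(B')\le r(\tilde A)r(B')$.

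Both sides of the displayed inequality are continuous functions of $(\x,\y)\in K\times K$, because $\tilde A$ is bounded and the inner product and norm are continuous; and $\cV/\cN$ is dense in $K$. So I would only need to verify the inequality when $\x=\{x_n\}+\cN$ and $\y=\{y_n\}+\cN$ for bounded sequences $\{x_n\},\{y_n\}$ in $H$. For such $\x,\y$, and for each $n$, let $B_n\in B(H)$ be the rank $\le 1$ operator $B_nz=\langle z,y_n\rangle\,x_n$, so that $\rho(AB_n)=|\langle Ax_n,y_n\rangle|$ and, by Lemma \ref{r1nr}, $r(B_n)=\tfrac12\bigl(\|x_n\|\,\|y_n\|+|\langle x_n,y_n\rangle|\bigr)$. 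The hypothesis then gives, for every $n$,
\[
2\,|\langle Ax_n,y_n\rangle|\ \le\ r(A)\bigl(\|x_n\|\,\|y_n\|+|\langle x_n,y_n\rangle|\bigr),
\]
which also holds trivially when $x_n=0$ or $y_n=0$.

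To finish, I would apply $\glim$ to both (bounded) sides of the last inequality and invoke its monotonicity and linearity, together with the elementary identities $\glim|c_n|=|\glim c_n|$ and $\glim\sqrt{a_n}=\sqrt{\glim a_n}$ (both consequences of the multiplicativity of $\glim$). Using $\langle \tilde A\x,\y\rangle=\glim\langle Ax_n,y_n\rangle$, $\|\x\|=\glim\|x_n\|$, $\|\y\|=\glim\|y_n\|$, and $\langle \x,\y\rangle=\glim\langle x_n,y_n\rangle$, the result is precisely the desired inequality for $\x,\y$, completing the reduction.

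The one genuine point to watch — and the reason the Berberian construction is invoked here — is that the $*$-representation $\phi:B(H)\to B(K)$ is not surjective, so a rank one operator on $K$ need not be $\tilde B$ for any $B\in B(H)$, and the hypothesis cannot be applied to it directly. The density of $\cV/\cN$ in $K$, combined with the continuity of the functionals involved, is exactly what bridges this gap: it lets us reduce to rank one operators built from the ``sequence'' vectors $\{x_n\}+\cN$, to which the hypothesis does apply coordinatewise. The remaining ingredients — the spectrum and numerical radius of a rank $\le 1$ operator, and the behaviour of $\glim$ under $|\cdot|$ and $\sqrt{\cdot}$ — are routine.
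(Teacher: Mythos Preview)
Your proof is correct and follows essentially the same route as the paper: write a rank one $B'$ as $\x\otimes\y$, apply the hypothesis coordinatewise to $x_n\otimes y_n$, and pass to the limit via $\glim$. You are in fact a bit more careful than the paper, which tacitly works only with unit vectors $\x,\y\in\cV/\cN$ represented by unit sequences; your density/continuity reduction from $K$ to $\cV/\cN$ and your handling of non-unit vectors fill in what the paper leaves implicit.
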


\begin{proof}
Suppose $\tilde A \in B(K)$ be the extension of $A \in B(H)$ in the Berberian construction.
Then $r(A) = r(\tilde A)$.
Let $B' = \x \otimes \y$ for two unit vectors  $\x, \y$ in $K$
associated with the sequences $\{x_n\}, \{y_n\}$ of
unit vectors in $H$.
Then
$$r(B')
= (1 + |(\x,\y)|)/2 = (1+\glim |(x_n,y_n)|)/2$$
and
$$\rho(\tilde A B') = \rho(\tilde A,\x\otimes \y) =
|(A\x,\y)| = \glim |(A x_n,y_n)|.$$
By our assumption on $A$,
$$
|(Ax_n,y_n)| = \rho(A(x_n\otimes y_n))
\le r(A) r(x_n\otimes y_n) = r(\tilde A)(1+ |(x_n,y_n)|)/2$$
for all positive integer $n$.
It follows that
$$\rho(\tilde A B') =\glim |(A x_n,y_n)| \le
r(\tilde A) \glim (1+|(x_n,y_n)|)/2
= r(\tilde A)r(B').$$
\vskip -.3in
\end{proof}

\bigskip\noindent
{\bf Proof of Theorem \ref{necessary}}

The implication ``(a) $\Rightarrow$ (b)'' of Theorem \ref{necessary}
is clear. We will establish the implications ``(b) $\Rightarrow$
(c)'' and ``(c) $\Rightarrow$ (a)''.

Suppose (b) holds. Without loss of generality we can suppose that
$\|A\|=1$. For arbitrary unit vectors $x,y \in H$, we consider the
rank one operators $B_{x,y} =x \otimes y$. Then
$$
|(Ax,y)| = \rho(AB_{x,y}) \leq r(A) r(B_{x,y}) \leq r(A).
$$
Taking supremum over $x$ and $y$, we conclude that $r(A)=\|A\|$.
Hence $\rho(A)=\|A\|$ and $A$ is radial, see \cite[Theorem 1.3-2]{Gus}.
Let $\tilde A \in B(K)$
be the extension of $A$ in the Berberian construction. Then by Lemma \ref{2.5},
$$\rho(\tilde A B) \le  r(\tilde A) r(B) \quad \hbox{for all rank
one} \ B \in B(K).$$ By the above discussion $\tilde A$ is a radial
operator. Hence there is a  $ \mu \in \sigma(\tilde A)$ such that
$|\mu|=\|\tilde A\| = 1$. Therefore $ \mu \in
\partial \sigma(\tilde A)$. On the other hand, we know that
every boundary point of the spectrum is an approximate eigenvalue.
Therefore, $\mu$ is an eigenvalue of $\tilde A$  and $\tilde A$ has
a decomposition $[\mu] \oplus T$ on $K= \langle x \rangle  \oplus \ K_1$. Without
loss of generality we can assume $\mu=1$, i.e., $\tilde A=[1]\oplus
T \in B(\IC \oplus K_1)$. Suppose that $B=u\otimes v$, where
$$u=\begin{pmatrix} \sqrt{1-t}\cr \sqrt{t}x\cr\end{pmatrix}, \quad
v=\begin{pmatrix} \sqrt{1-t}\cr \sqrt{t}y\cr \end{pmatrix}$$ with
$0\le t\le 1$, $x,y\in K_1$ are unit vectors. Because $B$ has rank 1
and $u$, $v$ are unit vectors in $K$, by Lemma \ref{r1nr}
$r(B)=\frac12(1+|\mbox{tr}(B)|) = \frac{1}{2}(1+|(u,v)|)$. Thus
$$
|(\tilde Au,v)|=|\mbox{tr}(\tilde AB)|=\rho(\tilde AB)\le r(\tilde
A)r(B)=\frac12(1+|(u,v)|),
$$
i.e.,
\begin{equation}\label{one}
|1-t+t(Tx,y)| \le
\frac12(1+|1-t+t(x,y)|).
\end{equation}
When $t$ is small enough, by Taylor series expansion we have
\begin{eqnarray*}
& &|1-t+t(Tx,y)| \\
&=& \sqrt{1+2t(\mbox{Re}(Tx,y)-1)+t^2[(\mbox{Re}(Tx,y)-1)^2+(\mbox{Im}(Tx,y))^2]}\\
             &=& 1+t(\mbox{Re}(Tx,y)-1)+O(t^2),
\end{eqnarray*}
and similarly
$$
|1-t+t(x,y)|=1+t(\mbox{Re}(x,y)-1)+O(t^2).
$$
Hence, by (\ref{one}),
$$
1+t(\mbox{Re}(Tx,y)-1)\le \frac12[1+1+t(\mbox{Re}(x,y)-1)+O(t^2)],
$$
and thus we get
$$
t\mbox{Re}\left((T-I/2)x,y\right) \le t/2+O(t^2),
$$
Consequently, we have
$$
\mbox{Re}\left((T-I/2)x,y)\right) \le \frac12.
$$
Because  $x,y\in K_1$ are arbitrary unit vectors, we see that
$\|T-I/2\| \le 1/2$. Hence $\|\tilde A-I/2\| \le 1/2$ which
implies $\|A-I/2\| \le 1/2$. Now taking $L = 2A -I$, we get the
desired result.

Next, we establish the implication ``(c) $\Rightarrow$ (a)''.
Suppose (c) holds with $\mu =1$, and (a) is not true. The
assumption implies $r(A)=1$ and so there is $B \in B(H)$ with $
r(B) = 1$ and $\rho(BA) = \rho(AB) > 1$. Therefore, there is a
$\lambda$ in the approximate spectrum of $BA$ such that
$|\lambda |= \rho(BA)$. We may apply the Berberian construction to
extend $H$ to $K$ so that $A, B$ are extended to $\tilde A,
\tilde B \in B(K)$  and $\lambda$ is an eigenvalue of
$\tilde B\tilde A$. For notational simplicity, we assume that $H = K$ and
$(A,B) = (\tilde A, \tilde B)$. Let $x \in H$ be such that  $BAx
= \lambda x$ with $|\lambda| > 1$. Let $Ax = a_{11}x + a_{21}y$
such that $\{x,y\}$ is an orthonormal set, and let $U$ be unitary
with $x, y$ as the first two columns. Then
$$U^*AU = \begin{pmatrix} a_{11} & * & * \cr a_{21} & * & * \cr 0 & * & * \cr\end{pmatrix},
\quad U^*BU = \begin{pmatrix} b_{11} & b_{12} & * \cr b_{21} &
b_{22} & * \cr * & * & * \cr\end{pmatrix}$$ such that
$$U^*BAU = \begin{pmatrix} \lambda & * & * \cr 0 & * & * \cr 0 & * & * \cr\end{pmatrix}.$$
Because $\|A - I/2\|\leq 1/2$, the first column of $U^*AU - I/2$
has norm at most $1/2$, i.e.,
$$|a_{11}-1/2|^2 + |a_{21}|^2 \le 1/4.$$
Thus, there is $\xi \ge 1$ such that
$$|\xi a_{11} - 1/2|^2 + |\xi a_{21}|^2 = 1/4,$$
i.e., $2(\xi a_{11} -1/2, \xi a_{21})^t$ is a unit vector in
$\Complex^2$. So, $\xi a_{11} - 1/2$ lies inside the circular
disk centered at the origin with radius 1/2, and is contained in
a line segment joining $-1/2$ to $e^{it}/2$ for some $t \in [0,
2\pi)$.  Hence, $2(\xi a_{11} - 1/2) \in W(V)$ for any unitary
matrix $V\in M_2$ with eigenvalues $-1, e^{it}$. In particular,
we can construct a unitary matrix $V \in M_2$  with eigenvalues
$-1, e^{it}$ such that the $(1,1)$ entry equals $2\xi a_{11}-1$.
Furthermore, we may replace $V$ by $\mbox{diag}(1,
e^{ir})V\mbox{diag}(1, e^{-ir})$, where $r=Arg( \xi a_{21})$, and
assume that the $(2,1)$ entry of $V$ is $2\xi a_{21}$. Thus, the
first column of $V$ equals $(2\xi a_{11} - 1, 2\xi a_{21})^t$. Set
$ \hat A = (I_2+V)/2 \oplus 0$ and
$$\hat B = \begin{pmatrix} b_{11} & b_{12}  \cr b_{21} & b_{22}\cr
\end{pmatrix} \oplus 0.$$
Then $r(\hat B) \le r(B) \le 1$, $W(\hat A)$ is a line segment
joining $0$ and $(1 + e^{it})/2$, and
$$\hat B \hat A = \begin{pmatrix}\xi \lambda & * \cr
0 & *  \cr\end{pmatrix} \oplus 0.$$ Since $\hat A$ is a scalar
multiple of a positive operator,  we have
$$\xi\lambda \in \sigma(\hat B \hat A) \subseteq W(\hat B) W(\hat A),$$
and thus,
$$1 < \xi|\lambda| \leq \rho(\hat A \hat B)
\le r(\hat A)r( \hat B) \le 1,$$ which is a contradiction.

Now suppose $H$ has dimension $n < \infty$.
We may assume that $A \in M_n$, and
we will prove that (c) $\iff$ (d).

The implication ``(d) $\Rightarrow$ (c)''
follows readily with $\|C-I_r/2\|\le 1/2$. On the other hand, for invertible $C\in M_{r}$,
\begin{eqnarray}
\left\|C-\frac12I_r\right\| \leq \frac12 &\Leftrightarrow& \|I_r-2C\|^2 \leq 1 \nonumber \\
&\Leftrightarrow& I_r \geq (I_r-2C)(I_r-2C^*) \nonumber \\
&\Leftrightarrow&  I_r \geq I_r-2C-2C^*+4CC^* \nonumber \\
&\Leftrightarrow&  I_r \geq I_r - 2C({C^*}^{-1}+C^{-1}-2I_r)C^* \nonumber\\
&\Leftrightarrow& \text{Re}(C^{-1}-I_r) \geq 0 \nonumber\\
&\Leftrightarrow& W(C^{-1}) \subseteq \{z ~ : ~\mbox{Re}(z)\ge 1\}. \nonumber
\end{eqnarray}

Suppose (c) holds. Then $A \in M_n$ satisfies $\rho(A) = r(A) =
\|A\| = |\mu|$, where $\mu \in \overline{W(A)} = W(A)$. For
notational simplicity, assume that $\mu=1$. Then $A$ is unitarily
similar to $I_p \oplus A_1$, where $A_1$ is in upper triangular
form such that $1\notin \sigma(A_1)$. If $A_1$ is invertible, take
$C=A_1$. Suppose $A_1$ is singular. Then, as $A_1 = (\alpha_{ij})$
is in triangular form, and some of its diagonal elements are zero.
If $\alpha_{ii}=0$ then the $(i,i)$ diagonal element of
$A_1-\frac12I_{n-p}$ is $\frac12$. Thus, by Lemma \ref{a11}, we
have $\alpha_{i,j}=\alpha_{j,i}=0$, $j \ne i$. Consequently, $A_1$
is permutational similar to $0_q \oplus C$, where $C$ is
invertible. Because $\|A - I_{n}/2\| = 1/2$,
we see that $\|C -I_{n-p-q}/2\| \le 1/2$. By the previous argument,  we see that $
W(C^{-1}) \subseteq \{z ~ : ~\mbox{Re}(z)\ge 1\}$. \qed
\\

If $A$ is a normal matrix, we have the following corollary.
\begin{cor}  Let $A \in M_n$ be normal with eigenvalues $\{\lambda_1,\cdots,\lambda_n\}$ ordered so that $|\lambda_1| \geq
\cdots \geq |\lambda_n|$. Then the following expressions are
equivalent:

$(a)$ $ \rho(AB) \leq r(A)r(B)$, for all $B \in M_n$,

$(b)$ $|2 \lambda_j-\lambda_1| \leq |\lambda_1|$, for every
$j=2,\dots,n.$

\end{cor}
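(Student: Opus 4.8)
The plan is to read this off Theorem~\ref{necessary}, using the equivalence of conditions (a) and (c) together with the fact that for a normal operator the quantity $\|A/\mu-I/2\|$ is determined by the eigenvalues. The case $A=0$ is trivial (both (a) and (b) hold), so assume $A\neq 0$; then $\rho(A)=|\lambda_1|>0$, and no eigenvalue of maximal modulus is zero.

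For the implication ``(b) $\Rightarrow$ (a)'', the first step is to show that $\lambda_1$ is the \emph{unique} element of $\sigma(A)$ of modulus $\rho(A)$, which is exactly what is needed to invoke condition (c). If $|\lambda_j|=|\lambda_1|$ for some $j\ge 2$, then the reverse triangle inequality gives $|2\lambda_j-\lambda_1|\ge 2|\lambda_j|-|\lambda_1|=|\lambda_1|$, so (b) forces $|2\lambda_j-\lambda_1|=|\lambda_1|$; equality here requires $\lambda_j$ and $\lambda_1$ to have the same argument, and combined with $|\lambda_j|=|\lambda_1|$ this yields $\lambda_j=\lambda_1$. Hence $\mu:=\lambda_1$ is the unique dominant spectral point. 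Since $A/\mu-I/2$ is normal with eigenvalues $\lambda_j/\mu-1/2$, one has $\|A/\mu-I/2\|=\max_j|\lambda_j/\mu-1/2|$; the $j=1$ term equals $1/2$, and for $j\ge 2$ the term equals $|2\lambda_j-\lambda_1|/(2|\lambda_1|)\le 1/2$ by (b). Thus $\|A/\mu-I/2\|\le 1/2$, so condition (c) of Theorem~\ref{necessary} holds, and therefore so does (a).

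For ``(a) $\Rightarrow$ (b)'', Theorem~\ref{necessary} supplies a unique $\mu\in\sigma(A)$ with $|\mu|=\rho(A)$ and $\|A/\mu-I/2\|\le 1/2$. Since $\lambda_1\in\sigma(A)$ also satisfies $|\lambda_1|=\rho(A)$, uniqueness forces $\mu=\lambda_1$. As $A/\mu-I/2$ is normal, $\max_j|\lambda_j/\mu-1/2|=\|A/\mu-I/2\|\le 1/2$, so in particular for each $j\ge 2$ we get $|2\lambda_j-\lambda_1|=2|\lambda_1|\,|\lambda_j/\lambda_1-1/2|\le|\lambda_1|$, which is (b).

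I expect the only real obstacle to be this bookkeeping around the dominant eigenvalue $\mu$: in the direction (b) $\Rightarrow$ (a) one must verify that (b) is strong enough to collapse every eigenvalue of maximal modulus onto the single value $\lambda_1$, so that the uniqueness clause needed to invoke condition (c) is genuinely met. Once that is in hand, the remaining content is merely the observation that for a normal matrix $\|A/\mu-I/2\|$ equals the largest of the numbers $|\lambda_j/\mu-1/2|$, and that $|\lambda_j/\mu-1/2|\le 1/2$ is the same as $|2\lambda_j-\lambda_1|\le|\lambda_1|$.
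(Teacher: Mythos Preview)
Your argument is correct and is precisely the intended deduction: the paper states this result as an immediate corollary of Theorem~\ref{necessary} without further proof, and you have written out the straightforward specialization of condition (c) to a normal matrix, including the one nontrivial bookkeeping step of showing that (b) forces all eigenvalues of maximal modulus to coincide with $\lambda_1$.
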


\medskip\noindent
{\bf Acknowledgment}

Asadi was partly supported by a grant from
IPM (No. 92470123).

Cheng was supported by
the research grant MYRG065(Y2-L2)-FST13-CCM from University of Macau.

Li is an affiliate member of the Institute for Quantum Computing, Waterloo,
an honorary professor of the University of Hong Kong and the Shanghai University.
His research is supported by USA NSF and HK RCG.

% ----------------------------------------------------------------
%\date
%\bibliographystyle{amsplain}
%\bibliography{}

\begin{thebibliography}{99}
\bibitem{Ali}
R. Alizadeh, \textit{Numerical range and product of matrices},
Linear Algebra Appl., 437 (2012), 1422--1425.

\bibitem{B62} S. K. Berberian,
Approximate proper vectors, Proc. Amer. Math. Soc. 13 (1962), 111-114.

\bibitem{Chg}
C.-M. Cheng., Y. Gao, \textit{A note on numerical range and product of
matrices}, Linear Algebra Appl., 438 (2013), 3139--3143.
Corrigendum, Linear Algebra Appl., 459 (2014), 622-624.

\bibitem{Gus} K.E. Gustafson, D.K.M. Rao, \textit{Numerical Range: The Field of Values of Linear Operators and Matrices},
 Springer-Verlag, New York, 1997.

\bibitem{Halmos}
 P. Halmos, A Hilbert Space Problem Book, Second edition,  Springer-Verlag,
 New York, 1982.

\bibitem{Hor} R.A. Horn, C.R. Johnson, \textit{Topics in Matrix Analysis}, Cambridge University Press, Cambridge, 1991.

\bibitem{Li} C.-K. Li, M.-C. Tsai, K.-Z. Wang, N.-C. Wong, \textit{The spectrum of the product operators, and the product of their numerical ranges}, preprint.
\end{thebibliography}
\end{document}